\title{Locating resonances on hyperbolic cones}
\author{Dean Baskin}
\email{dbaskin@math.tamu.edu}
\address{Department of Mathematics, Texas A\&M University \\ Mailstop
  3368 \\ College Station, TX 77843}
\author[J.L. Marzuola]{Jeremy L. Marzuola}
\email{marzuola@math.unc.edu}
\address{Department of Mathematics, UNC-Chapel Hill \\ CB\#3250
  Phillips Hall \\ Chapel Hill, NC 27599}
\newcommand{\pd}[1][]{\partial_{#1}}
\newcommand{\lap}{\Delta}
\newcommand{\NN}{\mathbb{N}}
\newcommand{\ZZ}{\mathbb{Z}}
\newcommand{\HH}{\mathbb{H}}
\newcommand{\hcone}{\ensuremath{HC(\mathbb{S}^1_\rho)}}
\newcommand{\g}{\ensuremath{\mathrm{g}}}
\newcommand{\upd}{\ensuremath{\mathrm{d}}}
\newcommand{\reals}{\mathbb{R}}
\renewcommand{\Im}{\operatorname{Im}}
\newtheorem{theorem}{Theorem}
\newtheorem{prop}[theorem]{Proposition}
\newtheorem{cor}[theorem]{Corollary}
\newtheorem{remark}[theorem]{Remark}
\begin{document}

\begin{abstract}
  In this note we explicitly compute the resonances on hyperbolic
  cones.  These are manifolds diffeomorphic to $\reals_{+}\times Y$
  and equipped with the singular Riemannian metric
  $dr^{2} + \sinh^{2}r \ h$, where $Y$ is a compact manifold without
  boundary and $h$ is a Riemannian metric on $Y$.  The calculation is
  based on separation of variables and Kummer's connection formulae
  for hypergeometric functions.  To our knowledge this is the one of
  the few explicit resonance calculations that does not rely on the
  resolvent being a two-point function.
\end{abstract}

\maketitle

\section{Introduction}
\label{sec:introduction}

In this note we explicitly calculate all of the resonances for a
hyperbolic cone in terms of the eigenvalues of the cross-section.  To
fix notation, let $X$ be a manifold of dimension $n+1$ diffeomorphic
to $(\reals_{+})_{r}\times Y$, where $Y$ is a compact $n$-manifold
without boundary.  Given a Riemannian metric $h$ on $Y$, we equip $X$
with the hyperbolic conic metric $dr^{2} + \sinh^{2}r\,h$.  Except in
the special case of hyperbolic space, $X$ has an isolated conic
singularity at $r=0$.

Given a hyperbolic cone $X$ and its associated metric $g$, we define
the resolvent 
\[
R(\lambda) = \left(-\lap _{g} - \lambda^{2} - \frac{n^{2}}{4}\right)^{-1},
\]
which is a bounded operator $L^{2}(X,g) \to L^{2}(X,g)$ for
$\Im \lambda > 0$ (with the possible exception of finitely many poles).
The resolvent $R(\lambda)$ admits a meromorphic continuation from
$\{ \Im \lambda > 0\}$ to the complex plane as an operator
$L^{2}_{c}(X,g) \to L^{2}_{\operatorname{loc}}(X,g)$, i.e., from
compactly supported functions to locally $L^{2}$ functions.  The poles
of this meromorphic continuation (aside from potentially finitely many
eigenvalues lying in the upper half plane) are called
\emph{resonances}.

This note establishes the following theorem:

\begin{theorem}
  \label{thm:main-thm}
  Let $\{\mu_j^2\}_{j\in \NN}$ be the eigenvalues of $-\Delta_h$.  The
  resonances of $-\lap_{g}$ are given by
  \[
  \lambda_{j,k} = -\imath \left(  \frac12 + k +  \sqrt{\left( \frac{n-1}{2}\right)^{2}
      + \mu_{j}^{2}}  \right)
  \]
  for $k \in \NN = \{ 0, 1, 2, \dots\}$, and $j$ so that
  \[
  \sqrt{\left( \frac{n-1}{2}\right)^{2}
    + \mu_{j}^{2} } \notin \frac12  + \ZZ.
  \]
  Here an eigenvalue $\mu_{j}^{2}$ with multiplicity $m$ for
  $-\lap_{h}$ adds multiplicity $m$ to $\lambda_{j,k}$.
\end{theorem}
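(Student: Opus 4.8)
\emph{Proof strategy.}
The plan is to separate variables: the problem decouples into a family of ordinary differential equations indexed by the eigenvalues of $-\lap_h$, each of which is a hypergeometric equation after an elementary change of variables, and the resonances are then read off from an explicit Wronskian computed via Kummer's connection formula. First I would compute $\lap_g$ in the given coordinates; since the Riemannian density is $(\sinh r)^n\,dr\,d\mathrm{vol}_h$,
\[
\lap_g = \pd[r]^2 + n\coth r\,\pd[r] + \frac{1}{\sinh^2 r}\,\lap_h .
\]
Expanding a compactly supported datum $f$ and the solution $u=R(\lambda)f$ in an $L^2(Y,h)$-orthonormal basis $\{\phi_j\}$ with $-\lap_h\phi_j=\mu_j^2\phi_j$, the equation $\bigl(-\lap_g-\lambda^2-\tfrac{n^2}{4}\bigr)u=f$ becomes, mode by mode,
\[
-u_j'' - n\coth r\,u_j' + \frac{\mu_j^2}{\sinh^2 r}\,u_j - \Bigl(\lambda^2+\frac{n^2}{4}\Bigr)u_j = f_j ,
\]
an ODE on $(\reals_+)_r$ with conic measure $(\sinh r)^n\,dr$. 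Thus $R(\lambda)=\bigoplus_j R_j(\lambda)$, and the resonances of $-\lap_g$ form the union over $j$ of the poles of the continued $R_j(\lambda)$, an eigenvalue $\mu_j^2$ of multiplicity $m$ contributing multiplicity $m$. One must also verify that this direct sum genuinely represents the continuation $L^2_c\to L^2_{\mathrm{loc}}$ with a locally finite pole set; this follows from elliptic (Weyl-law) estimates on $(Y,h)$, which show that in any bounded $\lambda$-region only finitely many $\mu_j$ produce a pole and that the high-frequency tail $\sum_{j\ \mathrm{large}}R_j(\lambda)f_j\,\phi_j$ converges in $L^2_{\mathrm{loc}}$.

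Next I would solve each radial ODE exactly. Reading off the indicial roots $\nu_\pm=-\tfrac{n-1}{2}\pm s_j$ at the cone tip, with $s_j=\sqrt{(\tfrac{n-1}{2})^2+\mu_j^2}$, and $-\tfrac n2\pm \imath\lambda$ at infinity, the substitution $z=-\sinh^2 r$ (so that $r=0,\infty$ become the hypergeometric singular points $z=0,\infty$) together with the prefactor $u_j=(-z)^{\nu_+/2}F(z)$ reduces the ODE to the hypergeometric equation with
\[
a_j=\tfrac14+\tfrac{s_j}{2}-\tfrac{\imath\lambda}{2},\qquad
b_j=\tfrac14+\tfrac{s_j}{2}+\tfrac{\imath\lambda}{2},\qquad
c_j=1+s_j ,
\]
a routine computation. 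The solution lying in the operator domain at $r=0$ is the one behaving like $r^{\nu_+}$; I normalize it as $u_j^0=(-z)^{\nu_+/2}\,{}_2F_1(a_j,b_j;c_j;z)/\Gamma(c_j)$, which is entire in $\lambda$. The outgoing solution at $r=\infty$, behaving like $e^{(-n/2+\imath\lambda)r}$, is — after the same prefactor — the hypergeometric solution based at $z=\infty$ with exponent $a_j$; call it $u_j^\infty$, again normalized to be entire in $\lambda$. (For the finitely many modes with $s_j<1$, which occur only when $n\in\{1,2\}$, both indicial solutions are square-integrable at the tip and there is a genuine choice of self-adjoint extension; I fix the Friedrichs realization, which is the one making $R(\lambda)\colon L^2\to L^2$ for $\Im\lambda\gg 0$, and note the other extensions are handled identically, replacing $\nu_+$ by $\nu_-$ in the affected modes.)

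Finally, for each mode the integral kernel of $R_j(\lambda)$ is proportional to $W_j(\lambda)^{-1}u_j^0(r_<)u_j^\infty(r_>)$, where $W_j(\lambda)=(\sinh r)^n\bigl(u_j^0(u_j^\infty)'-(u_j^0)'u_j^\infty\bigr)$ is independent of $r$; since $u_j^0,u_j^\infty$ are entire in $\lambda$, the poles of $R_j(\lambda)$ are exactly the zeros of $W_j(\lambda)$. Kummer's formula writes $u_j^0=A_j(\lambda)\,u_j^\infty+B_j(\lambda)\,\tilde u_j^\infty$ near $z=\infty$, with $\tilde u_j^\infty$ the non-outgoing solution and $B_j(\lambda)$ proportional to $\Gamma(a_j-b_j)\,\Gamma(a_j)^{-1}\,\Gamma(c_j-b_j)^{-1}$; feeding this in gives $W_j(\lambda)=B_j(\lambda)\times(\text{a nonvanishing factor})$. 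Hence $R_j(\lambda)$ has a pole exactly where $\Gamma(a_j)$ or $\Gamma(c_j-b_j)$ has a pole while $\Gamma(a_j-b_j)$ does not, i.e. at $a_j\in-\NN$ (giving $\imath\lambda=\tfrac12+s_j+2m$, so $k=2m$) or $c_j-b_j\in-\NN$ (giving $\imath\lambda=\tfrac32+s_j+2m$, so $k=2m+1$), $m\in\NN$ — together exactly the list $\lambda_{j,k}=-\imath(\tfrac12+k+s_j)$. At such a point the two $\Gamma$-poles fail to coexist precisely when $a_j-b_j=-\imath\lambda\notin\ZZ$ there, i.e. when $\tfrac12+k+s_j\notin\ZZ$, i.e. $s_j\notin\tfrac12+\ZZ$; and in the excluded case $s_j\in\tfrac12+\ZZ$ the connection formula is in its degenerate (logarithmic) form at every $\lambda_{j,k}$, and one checks using that form that the apparent pole of the continued resolvent cancels, so there is no resonance. (As a consistency check, for $Y=\mathbb S^n$ with its round metric one has $X=\HH^{n+1}$ and $s_j=j+\tfrac{n-1}{2}\in\tfrac12+\ZZ$ exactly when $n$ is even, recovering the classical absence of resonances on odd-dimensional hyperbolic space.) Summing over $j$ with the stated multiplicities yields the theorem.

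The hard part will be this last cancellation: correctly identifying which hypergeometric solution is the outgoing one and verifying that its $\Gamma$-normalization is entire and non-vanishing, and — above all — checking via the logarithmic form of Kummer's connection formula that when $s_j\in\tfrac12+\ZZ$ the residue of the continued resolvent at each $\lambda_{j,k}$ genuinely vanishes, so that the excluded set is \emph{exactly} $\{s_j\in\tfrac12+\ZZ\}$ and no larger. A subsidiary technical point is the uniform-in-$j$ control of the $R_j(\lambda)$ needed to legitimize writing $R(\lambda)=\bigoplus_j R_j(\lambda)$ as the meromorphic continuation on $L^2_c\to L^2_{\mathrm{loc}}$, together with the book-keeping of the self-adjoint extension at the cone point for the finitely many small modes.
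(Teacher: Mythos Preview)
Your proposal follows essentially the same strategy as the paper: separate variables, reduce each radial ODE to a hypergeometric equation, build the resolvent kernel from the two distinguished solutions, and read the poles off a Wronskian computed via Kummer's connection formula. The ingredients and logical structure match.

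The one substantive difference is the choice of coordinate. The paper sets $\sigma=(\cosh^{2}(r/2))^{-1}\in(0,1)$, placing the cone tip at $\sigma=1$ and infinity at $\sigma=0$, and obtains parameters
\[
a=\tfrac12-\imath\lambda,\qquad b=\tfrac12-\imath\lambda+s,\qquad c=2a,
\]
so the relevant connection formula is the one between $0$ and $1$, with prefactor $\Gamma(a)\Gamma(b)/\bigl[\Gamma(c)\Gamma(1+s)\bigr]$. You instead take $z=-\sinh^{2}r$, placing the tip at $z=0$ and infinity at $z=\infty$; your parameters $(a_j,b_j,c_j)$ are related to the paper's by the quadratic transformation available precisely because $c=2a$. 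What this buys the paper is a much cleaner treatment of the exceptional set: since $c=2a$, any pole of $\Gamma(a)$ is automatically absorbed by $1/\Gamma(c)$ via the power series $\sum_k\Gamma(a+k)\Gamma(b+k)/\bigl[\Gamma(c+k)\,k!\bigr]z^k$, and the case $s\in\tfrac12+\ZZ$ falls out of a short case analysis with no appeal to logarithmic connection formulae. In your coordinates the degenerate locus is $a_j-b_j=-\imath\lambda\in\ZZ$, where the two solutions at $z=\infty$ cease to be independent and the ``nonvanishing factor'' you invoke (their Wronskian, in your normalization) actually vanishes; so the cancellation you flag as the hard part genuinely requires the logarithmic Kummer formula. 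This is doable, but it is exactly the work the paper's coordinate avoids.
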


Note that if
\[
\sqrt{\left( \frac{n-1}{2}\right)^{2}
  + \mu_{j}^{2}} \in \frac12  + \ZZ,
\]
then $\mu_j$ contributes no resonances to $-\Delta_g$. 

In fact, in the proof of Theorem~\ref{thm:main-thm} we find \emph{all}
poles of the meromorphic continuation of the resolvent.  A simple
calculation shows that $\lambda_{j,k}^{2} + \frac{n^{2}}{4} \leq 0$
for all $j$ and $k$, so that none of the poles corresponds to an
eigenvalue.  Although the hyperbolic cones we consider do not fit into
the framework of Patterson--Sullivan theory~\cite{Patterson-fuchsian, Sullivan},
the lack of any eigenvalues is in line with their characterization of
the bottom of the spectrum.  Indeed, for convex co-compact quotients
of hyperbolic space, the Laplacian has an eigenvalue in $(0, n^{2}/4)$
only when the dimension of the limit set (and hence the trapped set)
is large enough.  The hyperbolic cones considered here have no
trapping, so the lack of eigenvalues in this range should not be surprising.

By appealing to the standard Weyl law on compact manifolds,
Theorem~\ref{thm:main-thm} admits the following immediate consequence:
\begin{cor}
  \label{cor:weyl}
  Suppose that $(Y,h)$ is generic in the sense that
  \begin{equation*}
    \sqrt{\left( \frac{n-1}{2}\right)^{2} + \mu_{j}^{2}} \notin
    \frac{1}{2} + \ZZ
  \end{equation*}
  for all $j$.  Then the resonances on the hyperbolic cone $(X,g)$
  obey the following Weyl law:
  \begin{equation*}
    \# \left\{ \lambda_{j,k} : |\lambda_{j,k}| \leq \lambda\right\} =
      \frac{|B_{n}|}{(2\pi)^{n}(n+1)}\operatorname{Vol}(Y,h)
      \lambda^{n+1} + O(\lambda^{n}).
  \end{equation*}
\end{cor}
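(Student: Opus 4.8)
The plan is to deduce the Weyl law for resonances directly from the explicit description in Theorem~\ref{thm:main-thm} together with the classical Weyl law for the eigenvalues $\{\mu_j^2\}$ of $-\Delta_h$ on the compact $n$-manifold $(Y,h)$. Under the genericity hypothesis, the resonances are precisely the points
\[
\lambda_{j,k} = -\imath\left(\tfrac12 + k + \sqrt{\left(\tfrac{n-1}{2}\right)^2 + \mu_j^2}\right),
\qquad k \in \NN,
\]
each carried with the multiplicity of $\mu_j^2$. So the counting function is
\[
N(\lambda) := \#\{\lambda_{j,k} : |\lambda_{j,k}| \le \lambda\}
= \sum_j m_j \,\#\left\{k \in \NN : \tfrac12 + k + \nu_j \le \lambda\right\},
\]
where $\nu_j = \sqrt{\left(\tfrac{n-1}{2}\right)^2 + \mu_j^2}$ and $m_j$ is the multiplicity of $\mu_j^2$. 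The inner count is simply $\max(0, \lfloor \lambda - \nu_j + \tfrac12\rfloor + 1)$, which equals $\lambda - \nu_j + O(1)$ when $\nu_j \le \lambda$ and vanishes otherwise; in particular it is $\lambda - \mu_j + O(1)$ uniformly, since $\nu_j = \mu_j + O(1/\mu_j)$.

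First I would reduce to an integral against the eigenvalue counting measure. Writing $N_Y(t) = \#\{j : \mu_j \le t\}$ (with multiplicity), the Weyl law on $(Y,h)$ gives
\[
N_Y(t) = \frac{|B_n|}{(2\pi)^n}\operatorname{Vol}(Y,h)\, t^n + O(t^{n-1}).
\]
Then $N(\lambda) = \int_0^{\lambda}\bigl((\lambda - t) + O(1)\bigr)\,dN_Y(t) + (\text{error from replacing }\nu_j\text{ by }\mu_j)$. The substitution $\nu_j \mapsto \mu_j$ costs at most $\sum_{\mu_j \le \lambda} m_j \cdot O(1) = O(N_Y(\lambda)) = O(\lambda^n)$, which is absorbed in the stated remainder; likewise the $O(1)$ per eigenvalue term contributes $O(\lambda^n)$. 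So $N(\lambda) = \int_0^\lambda (\lambda - t)\,dN_Y(t) + O(\lambda^n)$, and integrating by parts this is $\int_0^\lambda N_Y(t)\,dt + O(\lambda^n)$. Substituting the Weyl asymptotics for $N_Y$ and integrating,
\[
\int_0^\lambda N_Y(t)\,dt = \frac{|B_n|}{(2\pi)^n}\operatorname{Vol}(Y,h)\int_0^\lambda t^n\,dt + O(\lambda^n)
= \frac{|B_n|}{(2\pi)^n(n+1)}\operatorname{Vol}(Y,h)\,\lambda^{n+1} + O(\lambda^n),
\]
which is exactly the claimed asymptotic.

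The one point that needs a little care is the precise meaning of $|\lambda_{j,k}| \le \lambda$: since all $\lambda_{j,k}$ are purely imaginary, $|\lambda_{j,k}|$ is literally $\tfrac12 + k + \nu_j$, so the count is exactly as above with no geometric subtlety in the complex plane. I would also double-check that no double counting arises from coincidences $\lambda_{j,k} = \lambda_{j',k'}$ with $(j,k) \ne (j',k')$: the theorem already prescribes the total multiplicity at each resonance point as the sum of the relevant $m_j$, so summing $m_j\,\#\{k : \dots\}$ over all $j$ (without first identifying coincident points) is the correct bookkeeping, and any such coincidences are harmless. The only genuinely quantitative input is the remainder bookkeeping, and there the worst term is $O(N_Y(\lambda)) = O(\lambda^n)$; since the main term is of order $\lambda^{n+1}$, this is the expected sharp-in-order remainder and matches the statement. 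I do not anticipate a substantive obstacle here — the result is a clean corollary of Theorem~\ref{thm:main-thm} and Weyl's law, and the proof is essentially the integration-by-parts computation above.
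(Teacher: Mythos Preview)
Your proposal is correct and follows exactly the approach the paper indicates: the paper gives no detailed proof of this corollary at all, merely stating that ``by appealing to the standard Weyl law on compact manifolds, Theorem~\ref{thm:main-thm} admits the following immediate consequence.'' Your argument---counting the lattice points $(j,k)$ with $\tfrac12+k+\nu_j\le\lambda$, replacing $\nu_j$ by $\mu_j$ at an $O(\lambda^n)$ cost, and integrating the Weyl asymptotic for $N_Y$---is precisely the computation the paper leaves to the reader, and your bookkeeping of the remainders is sound (modulo a harmless sign slip in the floor expression, which should read $\lfloor \lambda-\nu_j-\tfrac12\rfloor+1$).
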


\begin{remark}
  Many examples of generic $(Y,h)$ exist.  Although the standard
  metrics on the sphere and flat torus are non-generic, there are
  arbitrarily small generic perturbations of them.  Indeed, if $h_{0}$
  is the round metric on the sphere and $\alpha$ is any transcendental
  number, then $\alpha^{2}h_{0}$ satisfies the genericity condition.
  In particular, this shows that resonances can exist in principle in
  any dimension, even or odd.
\end{remark}

A necessary ingredient for Theorem~\ref{thm:main-thm} is of course the
meromorphic continuation of the resolvent in this setting.  We provide
only a sketch of that argument as it is nearly identical to the proof
provided by Guillarmou--Mazzeo~\cite[Section
3.3]{guillarmou2012resolvent}, which is in turn based on the
parametrix of Guillop{\'e}--Zworski~\cite{guillope-zworski1,
  guillope-zworski2} and previous analysis of
Sj{\"o}strand--Zworski~\cite{sjostrand1991complex}.  One defines the
parametrix
$Q(\lambda) = \tilde{\chi}_{0}R_{0}(\lambda)\chi_{0} +
\tilde{\chi}_{\infty}R_{\infty}(\lambda) \chi_{\infty}$,
where $R_{i}(\lambda)$ are model resolvents ($R_{0}$ is the resolvent
on a compact manifold with a conic singularity containing a large
compact region of $X$ and $R_{\infty}$ is the resolvent on a smooth
manifold hyperbolic near infinity) and $\chi_{i}$ and
$\tilde{\chi}_{i}$ are appropriately chosen cutoff functions.
Applying $(-\lap - \lambda^{2} - n^{2}/4)$ yields a remainder of the
form $I + \sum [-\lap_{X}, \tilde{\chi}_{i}]R_{i}(\lambda) \chi_{i}$.
Because the inclusion of the Friedrichs domain into $L^{2}$ is compact
on the compact piece,\footnote{For an explicit characterization of
  the Friedrichs domain, we direct the reader to
  Melrose--Wunsch~\cite[Proposition~3.1]{MW}.} we can use
the well-known mapping properties of the hyperbolic resolvent to
conclude that the remainder is a meromorphic Fredholm operator that is
invertible for large $\Im \lambda$.  Applying $R(\lambda)$ to both
sides and inverting the remainder shows that $R(\lambda)$ has a
meromorphic continuation.

One can also compute the poles of the \emph{scattering matrix} in this
setting.  Indeed, the scattering matrix can in general be written in
terms of an extension operator and the
resolvent~\cite{guillope-zworski1,borthwick2002scattering,
  graham2003scattering,guillarmouMRL, dzbook}.  With possibly
countably many exceptions, the poles of the scattering matrix are
precisely those of the resolvent.  These other poles (as in the case
of odd-dimensional hyperbolic space, which has no resolvent poles but
infinitely many scattering poles) typically arise as poles of the
extension operator and are localized on the boundary of the
conformal compactification.  Poles of the scattering
matrix not arising as poles of the resolvent provide a deep connection
between the scattering matrix and the conformal geometry of the
boundary at infinity~\cite{guillarmouMRL}.

Although many explicit calculations of resonances exist in the setting
of potential scattering on Euclidean space, many fewer such
calculations are known in geometric scattering theory.  To our
knowledge, Theorem~\ref{thm:main-thm} is one of the few explicit
calculations of resonances where the resolvent is not necessarily a
two-point function (i.e., where the resolvent depends on more than the
distance between two points).  The main such setting in which exact
calculations are known is that of quotients of hyperbolic space.  For
hyperbolic cylinders, the exact resonance structure was worked out by
Epstein~\cite{epstein:unpublished} and
Guillop{\'e}~\cite{guillope:1990}.  For hyperbolic surfaces,
Borthwick--Philipp~\cite{borthwick2014resonance} worked out the
resonances for hyperbolic warped products and
Datchev--Kang--Kessler~\cite{DKK} studied resonances of surfaces of
revolution obtained by removing a disk from a cone and attaching a
hyperbolic cusp.  Further examples for hyperbolic surfaces can be
found in Appendix~B of a paper of
Patterson--Perry~\cite{patterson-perry} and in the book by
Borthwick~\cite{borthwick2007spectral}.

In other settings, it is sometimes possible to work out the asymptotic
distribution of the resonances.  For example, S{\'a}
Barreto--Zworski~\cite{sabarreto-zworski} worked out resonances on the
Schwarzschild black hole asymptotically lie on a lattice, while
Stefanov~\cite{stefanov2006sharp} described the asymptotic
distribution of resonances exterior to a disk in Euclidean space.

On perturbations of $\reals^{3}$, solutions of the wave equation (or
the wave equation with a potential) have a resonance expansion on
compact sets.  The resonances of $-\lap$ (or $-\lap + V$) provide the
rates of decay and modes of oscillation seen in this expansion.  On
hyperbolic spaces, solutions of the corresponding wave equation also
have a resonance expansion (see, for example, the work of
Datchev~\cite{Datchev:2016} and the references therein).  Recent work
of the first author and collaborators~\cite{BVW1,BVW2} shows that
resonances on some asymptotically hyperbolic spaces also provide the
decay rates for solutions of the wave equation on asymptotically
Minkowski spaces.  One interpretation of the difference in decay rates
for the wave equation in even- and odd-dimensions is that
even-dimensional hyperbolic space has resonances, while
odd-dimensional hyperbolic space does not.

The proof of Theorem~\ref{thm:main-thm} has two main steps.  First, we
use the warped product structure to construct an explicit
representation of the resolvent on hyperbolic cones
(Proposition~\ref{prop:resolvent-formula}).  This formula is found by
using a coordinate representation (that is essentially the same as the
one used in Patterson's computation of the hyperbolic
resolvent~\cite{patterson}) and then applying Kummer's connection
formula for hypergeometric functions.  We then compute poles of the
resolvent by analyzing our resolvent formula; the poles arise as poles
of the relevant Gamma functions.  To deal with the other values of the
parameters, we rely heavily on the formulae for hypergeometric
functions coming from \cite{NIST:DLMF,Olver:2010:NHMF}.  In the final
section of the paper, we give some explicit resonance expansions for a
handful of relevant model problems.

{\sc Acknowledgments.} {The first author was supported in part by
  U.S. NSF Grant DMS--1500646.  The second author was supported in
  part by U.S. NSF Grants DMS--1312874 and DMS-1352353. We wish to
  thank Jesse Gell-Redman, Luc Hillairet, Laura Matusevich, Rafe Mazzeo and Jared
  Wunsch for helpful conversations during the
  production of this work.  We also thank the anonymous referee for giving us
  many useful suggestions for dramatically improving the exposition for the result, especially relating 
  to carefully explaining the meromorphic continuation of the resolvent, constructing the resolvent for this family of manifolds and
  the connection between the resolvent and the scattering matrix.}


\section{An Explicit Expression For The Resolvent}

We consider the resolvent $R(\lambda)$ given by
\begin{equation}
  \label{eq:resolvent-def}
  R(\lambda) = \left( - \lap_{g} - \lambda^{2} - \frac{n^{2}}{4}\right)^{-1},
\end{equation}
where we take the Friedrichs extension of the Laplacian.  We adopt the
convention that $R(\lambda)$ is a bounded operator on $L^{2}(X)$ when
$\Im \lambda \gg 0$.

Because $X$ is a warped product, we may analyze the resolvent by
separating variables.  Suppose that $\{ \phi_{j}\}$ is an
orthonormal basis of eigenfunctions of $\lap_{h}$ with eigenvalues
$-\mu_{j}^{2}$.  For a function $f$ on $X$, we then write
\begin{equation*}
  f(r,y) = \sum_{j=0}^{\infty}f_{j}(r) \phi_{j}(y).
\end{equation*}
As the $\phi_{j}$ are orthogonal, the resolvent decomposes into a
family of one-dimensional resolvents:
\begin{equation*}
  u = R(\lambda) f = \sum_{j=0}^{\infty} \left( R_{j}(\lambda)
    f_{j}\right)(r) \phi_{j}(y).
\end{equation*}
In terms of the coordinate $r$ and the eigenvalue $-\mu_{j}^{2}$,
$R_{j}(\lambda)$ has the following expression acting on
$L^{2}(\reals_{+}, \sinh^{n}r\,dr)$:
\begin{equation*}
  R_{j}(\lambda) = \left( - \pd[r]^{2} - n \coth r\, \pd[r] +
    \frac{\mu_{j}^{2}}{\sinh^{2}r} - \lambda^{2} - \frac{n^{2}}{4}\right)^{-1}.
\end{equation*}

The main result of this section is the following formula:
\begin{prop}
  \label{prop:resolvent-formula}
  For $f_{j} \in C^{\infty}_{c}(\reals_{+}\times Y)$ and $\sigma =
  (\cosh^{2} (r/2) )^{-1}$, the resolvent is given by
  \begin{align*}
&    (R_{j}(\lambda)f_{j})(\sigma) =
          \frac{\Gamma(a)\Gamma(b)}{\Gamma(c)\Gamma(1+s)} \\
    &  \hspace{.25cm}  \times \bigg[ -\int_{1}^{\sigma} f_{j}(\rho)
    F(a,b,c;\sigma)F(a,b,1+s; 1-\rho)
    \left(\frac{\sigma}{\rho}\right)^{\frac{n}{2}-\imath\lambda}\left(\frac{1-\sigma}{1-\rho}\right)^{-\frac{n-1}{4}+\frac{1}{2}s} 
    \rho^{c-2}(1-\rho)^{s}\, d\rho \\
    &\hspace{.5cm} + \int_{0}^{\sigma}f_{j}(\rho) F(a,b,c;\rho)F(a,b,1+s;1-\sigma)    \left(\frac{\sigma}{\rho}\right)^{\frac{n}{2}-\imath\lambda}\left(\frac{1-\sigma}{1-\rho}\right)^{-\frac{n-1}{4}+\frac{1}{2}s} 
    \rho^{c-2}(1-\rho)^{s}\, d\rho\bigg],
  \end{align*}
  where $a$, $b$, $c$, and $s$ are given by
  \begin{align*}
    a &= \frac{1}{2}-\imath\lambda, & 
    b &= \frac{1}{2}-\imath\lambda + s, \\ 
    c &= 1 - 2\imath\lambda, & 
    s &= \sqrt{\left( \frac{n-1}{2}\right)^{2} + \mu_{j}^{2}}.
  \end{align*}

\end{prop}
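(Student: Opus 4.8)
The plan is to reduce everything to the one-dimensional problem already set up above and to solve the radial equation $\left(-\pd[r]^{2} - n\coth r\,\pd[r] + \mu_{j}^{2}/\sinh^{2}r - \lambda^{2} - n^{2}/4\right)u = f_{j}$ by the usual Sturm--Liouville Green's function recipe. In divergence form the radial operator is $-(\sinh^{n}r)^{-1}\pd[r](\sinh^{n}r\,\pd[r]) + \mu_{j}^{2}/\sinh^{2}r - \lambda^{2} - n^{2}/4$, so for $\Im\lambda\gg 0$ one has
\[
(R_{j}(\lambda)f_{j})(r) = \frac{1}{\mathcal{W}}\left[u_{0}(r)\int_{r}^{\infty}u_{\infty}(\rho)f_{j}(\rho)\sinh^{n}\rho\,d\rho + u_{\infty}(r)\int_{0}^{r}u_{0}(\rho)f_{j}(\rho)\sinh^{n}\rho\,d\rho\right],
\]
where $u_{\infty}$ is the (unique up to scale) homogeneous solution lying in $L^{2}(\sinh^{n}r\,dr)$ near $r=\infty$, $u_{0}$ is the homogeneous solution lying in the Friedrichs domain near the conic point $r=0$, and $\mathcal{W} = \sinh^{n}r\,(u_{\infty}\pd[r]u_{0} - u_{0}\pd[r]u_{\infty})$ is the ($r$-independent) weighted Wronskian. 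Thus the work is to (i) find $u_{0}$ and $u_{\infty}$ explicitly, (ii) evaluate $\mathcal{W}$, and (iii) change variables to $\sigma = (\cosh^{2}(r/2))^{-1}$.

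For (i) and the change of variables, note that $1-\sigma = \tanh^{2}(r/2)$, $\sinh^{2}r = 4(1-\sigma)\sigma^{-2}$, $d\sigma = -\sigma\sqrt{1-\sigma}\,dr$, and $\sinh^{n}r\,dr$ becomes $2^{n}(1-\sigma)^{\frac{n-1}{2}}\sigma^{-n-1}\,d\sigma$ (up to the sign from the orientation reversal). Substituting $u = \sigma^{\frac{n}{2}-\imath\lambda}(1-\sigma)^{-\frac{n-1}{4}+\frac{s}{2}}v$ into the homogeneous radial equation turns it, after a direct computation (essentially that of Patterson~\cite{patterson}), into Gauss's hypergeometric equation for $v(\sigma)$ with the parameters $a$, $b$, $c$ in the statement; one checks this bookkeeping by matching the indicial exponents, namely that $v$ has exponents $\{0,1-c\}=\{0,2\imath\lambda\}$ at $\sigma=0$ and $\{0,c-a-b\}=\{0,-s\}$ at $\sigma=1$, reproducing the behaviors $e^{(\pm\imath\lambda-n/2)r}$ as $r\to\infty$ and $r^{-\frac{n-1}{2}\pm s}$ as $r\to0$. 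Since $|\sigma^{-\imath\lambda}| = \sigma^{\Im\lambda}\to0$ as $\sigma\to0$ for $\Im\lambda>0$, the $L^{2}$ solution near infinity is
\[
u_{\infty}(\sigma) = \sigma^{\frac{n}{2}-\imath\lambda}(1-\sigma)^{-\frac{n-1}{4}+\frac{s}{2}}F(a,b,c;\sigma),
\]
and because $a+b+1-c = 1+s$, the solution of the same equation that is regular at $\sigma=1$ is
\[
u_{0}(\sigma) = \sigma^{\frac{n}{2}-\imath\lambda}(1-\sigma)^{-\frac{n-1}{4}+\frac{s}{2}}F(a,b,1+s;1-\sigma);
\]
its leading behavior $(1-\sigma)^{-\frac{n-1}{4}+\frac{s}{2}}$ corresponds to $r^{-\frac{n-1}{2}+s}$, which places $u_{0}$ in the Friedrichs domain at the conic point (see~\cite[Proposition~3.1]{MW}).

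It remains to assemble the Green's function and carry out (ii). The common prefactor $\phi(\sigma)=\sigma^{\frac{n}{2}-\imath\lambda}(1-\sigma)^{-\frac{n-1}{4}+\frac{s}{2}}$ drops out of $u_{\infty}\pd[r]u_{0}-u_{0}\pd[r]u_{\infty} = \phi^{2}(v_{\infty}\pd[r]v_{0}-v_{0}\pd[r]v_{\infty})$, and using $\pd[r] = -\sigma\sqrt{1-\sigma}\,\pd[\sigma]$ reduces $\mathcal{W}$ to a constant multiple of the Wronskian in $\sigma$ of the two standard solutions $F(a,b,c;\sigma)$ and $F(a,b,1+s;1-\sigma)$ of Gauss's equation. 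Evaluating this Wronskian is where Kummer's connection formula enters: expanding $F(a,b,1+s;1-\sigma)$ as a linear combination of $F(a,b,c;\sigma)$ and $\sigma^{1-c}F(a-c+1,b-c+1,2-c;\sigma)$ and using the elementary Wronskian of that last pair yields $\mathcal{W}$ up to the factor $\Gamma(c)\Gamma(1+s)/(\Gamma(a)\Gamma(b))$ (times a power of $2$). Finally, substituting $u_{0}$, $u_{\infty}$ and the pushed-forward measure into the Green's function formula, the two ranges $\rho>r$ and $\rho<r$ become $\int_{1}^{\sigma}$ (with the minus sign coming from $d\sigma = -\sigma\sqrt{1-\sigma}\,dr$) and $\int_{0}^{\sigma}$, and collecting powers of $\rho$ and $1-\rho$ produces exactly the displayed factors $(\sigma/\rho)^{\frac{n}{2}-\imath\lambda}$, $((1-\sigma)/(1-\rho))^{-\frac{n-1}{4}+\frac{s}{2}}$, $\rho^{c-2}(1-\rho)^{s}$, the powers of $2$ cancelling between the measure and $\mathcal{W}^{-1}$. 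Since $f_{j}$ is compactly supported the resulting expression is manifestly meromorphic in $\lambda$, so it furnishes the meromorphic continuation of $R_{j}(\lambda)$ described in the introduction.

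The main obstacle is the bookkeeping in (ii): getting all the Gamma factors, signs, and powers of $2$ arising from the change of variables, from Kummer's connection formula, and from the classical hypergeometric Wronskian to collapse to precisely $\Gamma(a)\Gamma(b)/(\Gamma(c)\Gamma(1+s))$. A secondary point requiring care is the identification of the correct homogeneous solutions---that $u_{\infty}$ really is the $L^{2}$ solution for $\Im\lambda\gg0$ and that $u_{0}$ really lies in the Friedrichs domain at the cone tip---after which the Green's function formula is standard and the identity propagates to all $\lambda$ by meromorphy.
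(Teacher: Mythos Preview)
Your proposal is correct and follows essentially the same route as the paper: reduce the radial ODE to Gauss's hypergeometric equation via the substitution $u=\sigma^{\alpha}(1-\sigma)^{\beta}v$ with $\alpha=\frac{n}{2}-\imath\lambda$, $\beta=-\frac{n-1}{4}+\frac{s}{2}$, take the two distinguished solutions $F(a,b,c;\sigma)$ and $F(a,b,1+s;1-\sigma)$, and use Kummer's connection formula to evaluate their Wronskian and hence the Gamma prefactor. The only organizational difference is that the paper first passes to the $\sigma$ variable and then applies variation of parameters to the inhomogeneous hypergeometric equation, whereas you write the Sturm--Liouville Green's function in $r$ and then push forward the measure; the resulting bookkeeping (powers of $2$, of $\rho$, of $1-\rho$) is the same either way.
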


\begin{proof}
  Given some $g \in C^{\infty}_{c}(\reals_{+})$, we wish to find
  $u = R_{j}(\lambda)g$, depending meromorphically on
  $\lambda$ so that $u_{j} \in L^{2}(\reals_{+}, \sinh^{n}r\,dr)$ for
  $\Im \lambda \gg 0$ and 
  \begin{equation}
    \label{eq:radial-r-coord}
    \left( - \pd[r]^{2} - n \coth r \, \pd[r] +
      \frac{\mu_{j}^{2}}{\sinh^{2}r} - \lambda^{2} -
      \frac{n^{2}}{4}\right) u = g.
  \end{equation}

  We start by reducing to a hypergeometric equation using the variable
  $\sigma = \left( \cosh^{2} (r/2) \right)^{-1}$.  Under this change, $r\to
  0$ corresponds to $\sigma \uparrow 1$, while $r\to \infty$
  corresponds to $\sigma \downarrow 0$.  In terms of $\sigma$,
  equation~\eqref{eq:radial-r-coord} becomes
  \begin{equation}
    \label{eq:radial-sigma-coord}
    \left( - \sigma^{2}(1-\sigma)\pd[\sigma]^{2} -  \left[ (1-n) -
        \frac{3-n}{2}\sigma\right]\sigma\pd[\sigma] +
      \frac{\sigma^2 \mu_{j}^{2}}{4(1-\sigma)} - \lambda^{2} -
      \frac{n^{2}}{4}\right)u = g.
  \end{equation}
  Both $0$ and $1$ are regular singular points for this equation,
  which has indicial roots given by
  \begin{align*}
    &\alpha_{\pm} = \frac{n}{2} \pm \imath \lambda & \quad &\text{at }\sigma
                                               = 0, \\
    &\beta_{\pm} = -\frac{n-1}{4} \pm \frac{1}{2}\sqrt{\left(
    \frac{n-1}{2}\right)^{2} + \mu_{j}^{2}} &\quad &\text{at }\sigma =1.
  \end{align*}
  The requirement that $u\in L^{2}(\reals_{+}, \sinh^{n}r\,dr)$
  for $\Im \lambda \gg 0$ corresponds to requiring that
  \begin{equation*}
    u(\sigma) \sim \sigma^{\frac{n}{2}-\imath\lambda}
  \end{equation*}
  as $\sigma \downarrow 0$ and that
  \begin{equation*}
    u(\sigma) \sim \sigma^{-\frac{n-1}{4}+\frac{1}{2}\sqrt{\left(\frac{n-1}{2}\right)^{2}+\mu_{j}^{2}}}
  \end{equation*}
  as $\sigma \uparrow 1$.

  We now factor out the desired indicial behavior from $u$ and
  define the function $v$ by
  \begin{equation*}
    u = \sigma^{\alpha}(1-\sigma)^{\beta}v,
  \end{equation*}
  where $\alpha = \frac{n}{2} - \imath \lambda$ and $\beta=
  -\frac{n-1}{4}+\frac{1}{2}\sqrt{\left(\frac{n-1}{2}\right)^{2}+\mu_{j}^{2}}$
  are the preferred indicial roots above.
  Making this substitution yields the following equation for $v$:
  \begin{align*}
    g &= - \sigma^{\alpha+2}(1-\sigma)^{\beta+1}v'' -
            \sigma^{\alpha+1}(1-\sigma)^{\beta}\left( (1-n+2\alpha) -
            \sigma\left[2\alpha + 2\beta -
            \frac{n-3}{2}\right]\right)v' \\
    &\quad\quad - \sigma^{\alpha}(1-\sigma)^{\beta}\left( \alpha (\alpha -
      1) + (1-n)\alpha + \lambda^{2} + \frac{n^{2}}{4}\right) v \\
    &\quad\quad -\sigma^{\alpha+1}(1-\sigma)^{\beta-1} \left(
      \beta(\beta-1) + \frac{n+1}{2}\beta -
      \frac{\mu_{j}^{2}}{4}\right) \\
    &\quad\quad + \sigma^{\alpha+1}(1-\sigma)^{\beta} \left(
      2\alpha\beta + \alpha(\alpha-1) + \beta(\beta-1) -
      \frac{n-3}{2}(\alpha + \beta)- \frac{\mu_{j}^{2}}{4}\right)v.
  \end{align*}
  The exponents $\alpha$ and $\beta$ were chosen precisely so that the
  new equation would have $0$ as an indicial root at both $0$ and $1$
  (i.e., so that the middle two terms would vanish).  In other words,
  after dividing by $-\sigma^{\alpha+1}(1-\sigma)^{\beta}$, $v$
  must satisfy
  \begin{align*}
    -\frac{g}{\sigma^{\alpha+1}(1-\sigma)^{\beta}} &=
           \sigma(1-\sigma)v'' + \left( [1-n+2\alpha] -
             \sigma\left[2\alpha+2\beta -
                                                         \frac{n-3}{2}\right]\right)v'
    \\
    &\quad\quad - \left( 2\alpha\beta + \alpha(\alpha-1) +
      \beta(\beta-1) - \frac{n-3}{2}(\alpha+\beta) - \frac{\mu_{j}^{2}}{4}\right)v.
  \end{align*}
  Plugging in the values of $\alpha$ and $\beta$ yields the following
  equation for $v$:
  \begin{align}
    \label{eq:hypergeom}
    -g\sigma^{-\alpha-1}(1-\sigma)^{-\beta} &=
        \sigma(1-\sigma) v'' + \left( (1-2\imath\lambda) 
        -\sigma \left[ 2 - 2\imath\lambda + \sqrt{\left(
                                                         \frac{n-1}{2}\right)^{2} +
                                                         \mu_{j}^{2}}\right]\right)v' \notag
    \\
    &\quad\quad- \left( \frac{1}{2} - \imath\lambda\right) \left( \frac{1}{2} -
    \imath\lambda + \sqrt{\left(\frac{n-1}{2}\right)^{2} + \mu_{j}^{2}}\right)v.
  \end{align}
  
  Equation~\eqref{eq:hypergeom} is an inhomogeneous hypergeometric
  equation with parameters $a$, $b$, and $c$ given by
  \begin{align*}
    c &= 1 - 2\imath\lambda, \\
    a+b &= 1 - 2\imath \lambda + \sqrt{\left(\frac{n-1}{2}\right)^{2}
          + \mu_{j}^{2}}, \\
    ab &= \left( \frac{1}{2} - \imath\lambda\right) \left( \frac{1}{2} -
    \imath\lambda + \sqrt{\left(\frac{n-1}{2}\right)^{2} + \mu_{j}^{2}}\right).
  \end{align*}
  In particular, we have that
  \begin{align*}
    &a = \frac{1}{2} - \imath\lambda, \quad\quad     b = a + s, \\
    &c = 2a, \quad\quad
    s= \sqrt{\left(\frac{n-1}{2}\right)^{2} + \mu_{j}^{2}}.
  \end{align*}
  
  For generic $a$, $b$, and $c$, the solution $u_{1}$ of the
  homogeneous equation (i.e., equation~\eqref{eq:hypergeom} with
  $g=0$) that is regular at $\sigma = 0$ is given by
  \begin{equation*}
    u_{1}(\sigma) = F(a,b,c;\sigma),
  \end{equation*}
  while the solution $u_{2}$ that is regular at $\sigma = 1$ is given
  by
  \begin{equation*}
    u_{2} (\sigma) = F(a,b,a+b+1-c;1-\sigma),
  \end{equation*}
  where $F(a,b,c;z)$ denotes the standard hypergeometric function with
  parameters $a$, $b$, and $c$.  In general, these two solutions are
  linearly independent and one can compute their Wronskian using
  standard facts about hypergeometric functions and Kummer's
  connection formula~\cite[15.10.17]{NIST:DLMF}:
  \begin{equation*}
    W[u_{1},u_{2}](\sigma) =
    \frac{\Gamma(c-1)\Gamma(a+b-c+1)}{\Gamma(a)\Gamma(b)}(1-c)\sigma^{-c}(1-\sigma)^{c-a-b-1}. 
  \end{equation*}
  
  This yields the following formula for the solution of
  equation~\eqref{eq:hypergeom} that is regular at both $0$ and $1$:
  \begin{align*}
    v(\sigma) = \frac{\Gamma(\frac{1}{2}-\imath
                    \lambda)\Gamma(\frac{1}{2}-\imath\lambda + s)}
                    {\Gamma(1-2\imath \lambda)\Gamma (1 + s)} 
                     &\bigg[ -u_{1}(\sigma)\int_{1}^{\sigma}
      g(\rho)u_{2}(\rho)\rho^{c-\alpha-2}(1-\rho)^{1+a+b-c-\beta-1} \,d\rho\\
    &\quad\quad\quad\quad + u_{2}(\sigma)
      \int_{0}^{\sigma}g(\rho)u_{1}(\rho)\rho^{c-\alpha-2}(1-\rho)^{1+a+b-c-\beta-1}\,
      d\rho\bigg].
  \end{align*}
  Here the exponents are given by
  \begin{align*}
    c - \alpha - 2 &= -1 - \frac{n}{2} - \imath\lambda, \\
    1 + a + b - c-\beta-1 &= \frac{n-1}{4} + \frac{1}{2} \sqrt{\left(
                            \frac{n-1}{2}\right)^{2} + \mu_{j}^{2}}.
  \end{align*}
  
  Multiplying $v$ by $\sigma^{\alpha}(1-\sigma)^{\beta}$ finishes
  the proof.
\end{proof}

\subsection{Connection to Past Resolvent Computations}

The resolvent formula in Proposition~\ref{prop:resolvent-formula}
allows us to quickly recover a formula for the resolvent on hyperbolic
space.  The resolvent on $\mathbb{H}^{n+1}$ is a two-point function
and so its integral kernel $K_{\lambda}(z,z')$ depends only on the
distance between $z$ and $z'$.  It thus suffices to consider the case
where the pole is at the origin (and so only the first integral is
relevant).  As the delta function is spherically symmetric, only the
zero mode on the cross-section ($\mathbb{S}^{n}$) contributes to the
formula and so we should multiply our formula by $1/(2\pi)$ to account
for the eigenfunction.  Accounting for the natural scaling for the
delta function on hyperbolic space then yields an expression matching
those in the literature (see, e.g., the work of
Guillarmou--Mazzeo~\cite[Section 3.1]{guillarmou2012resolvent}).

\section{Locating the resonances}
\label{sec:locat-reson-stat}

We handle resonances on a case by case basis using the structure of
the hypergeometric and Gamma functions in the expression
for the resolvent.  Since the Gamma functions will play a dominant
role in our discussion, we note that Gamma functions have simple poles at
the non-positive integers, which we denote here by $\ZZ_-$.  This
motivates the following case by case analysis.  The main observation
(noted in standard special functions texts~\cite[15.2.2]{NIST:DLMF}) is that
the function
\begin{equation*}
  \frac{1}{\Gamma (c)}F(a,b,c;z)
\end{equation*}
is entire in $a$, $b$, and $c$.  

\subsection{$c \notin \ZZ_-$}

When $c \notin \ZZ_-$, the hypergeometric functions occurring in the
resolvent formula in Proposition \ref{prop:resolvent-formula} are
regular.  Hence, the only possible poles occur due to the $\Gamma$
prefactors.  Since it is impossible for $a$ to be a
negative integer since $c = 2a$, we have two remaining scenarios.

\begin{enumerate}

\item {$c \notin \ZZ_-$,  $b \notin \ZZ_-$,  $a \notin \ZZ_-$:}

{\bf No resolvent poles}:  In this case, the $\Gamma$ function pre-factor for the resolvent has no poles, and since none arise from the hypergeometric functions (see Formula $(15.2.1)$ from \cite{NIST:DLMF}), there are no poles.

\item {$c \notin \ZZ_-$,  $b \in \ZZ_-$, $a \notin \ZZ_-$:}

{\bf Resolvent poles}:  In this case $b$ is a pole of the
Gamma function.  We see that this pole is non-removable as we can
easily see that the hypergeometric functions in the resolvent formula
are non-zero.

\end{enumerate}

\subsection{$c \in \ZZ_-$}

When $c \in \ZZ_-$, we must work a bit harder and examine the
interplay between the parameters $a$, $b$, and $c$.  Because the
function $\frac{1}{\Gamma(c)}F(a,b,c;z)$ is entire in $a$, $b$, and
$c$, the poles must arise as poles of $\Gamma (a) \Gamma (b)$.   This
leads naturally to four scenarios.

\begin{enumerate}

\item {\bf $c \in \ZZ_-$,  $b \notin \ZZ_-$,  $a \notin \ZZ_-$:}

  {\bf No resolvent poles}: The numerator $\Gamma(a) \Gamma(b)$ does
  not have poles here, so there is no resonance in this case.

\item {\bf $c \in \ZZ_-$,  $b \notin \ZZ_-$,  $a \in \ZZ_-$:}

{\bf No resolvent poles}:  The apparent pole arising from $\Gamma(a)$
is in fact removable.  Indeed, the power series expansion~\cite[15.2.1]{NIST:DLMF}
\begin{equation*}
  \frac{\Gamma (a)\Gamma(b)}{\Gamma (c)}F(a,b,c;z) =
  \sum_{k=0}^{\infty}\frac{\Gamma(a+k)\Gamma(b+k)}{\Gamma(c+k)k!}z^{k} 
\end{equation*}
and the fact that $c = 2a \leq a$ show that the resolvent is regular here.

\item {\bf $c \in \ZZ_-$,  $b \in \ZZ_-$, $a \notin \ZZ_-$:}

  {\bf No resolvent poles}: As in the previous case, the resolvent has
  a removable singularity; the same power series expansion (and the
  fact that $c \leq b$) shows that the pole is removable.

\item {\bf $c \in \ZZ_-$,  $b \in \ZZ_-$, $a \in \ZZ_-$:}

  {\bf Resolvent poles}: In this case, we again see that
  $(\Gamma(a)/\Gamma(c)) F(a,b,c;z)$ has a removable singularity at
  $a$.  However, as can be seen once again from Formula $(15.2.1)$
  of~\cite{NIST:DLMF}, the pole of $\Gamma(b)$ is a pole of the
  resolvent.  That it is a true pole (and not a removable singularity)
  follows from the observation that the power series of 
  $$\frac{\Gamma (a) }{
  \Gamma(c) } F(a,b,c;z)$$ 
  is non-zero.
\end{enumerate}

Having understood all cases, we can now finish the proof of the main theorem.

\begin{proof}[Proof of Theorem~\ref{thm:main-thm}]
  We start by fixing an eigenvalue $\mu_{j}^{2}$ on the cross-section
  and setting
  \begin{equation*}
    s = \sqrt{\left( \frac{n-1}{2}\right)^{2}  + \mu_{j}^{2}} .
  \end{equation*}
  The only possible resonances for this eigenvalue occur when
  \begin{equation*}
    b = \frac{1}{2} - \imath \lambda + s = -k \in \ZZ_{-},
  \end{equation*}
  i.e., for
  \begin{equation*}
    \lambda = -\imath \left( \frac{1}{2} + k + s\right).
  \end{equation*}

  If $s \in \frac{1}{2} + \NN$ and $\lambda$ as above, then we must have
  \begin{equation*}
    \imath\lambda = s + \frac{1}{2} - b  \in \NN,
  \end{equation*}
  and so $c \in \ZZ_{-}$ but $a\notin \ZZ_{-}$.  This is a case above
  in which the resolvent has a removable pole and so there is no
  resonance arising from this $\mu_{j}^{2}$.  

  Suppose now that $s \notin \frac{1}{2} + \NN$ and
  $\lambda$ is as above.  Note that we must then have
  \begin{equation*}
    \imath \lambda = s + \frac{1}{2} - b \notin \NN,
  \end{equation*}
  and so $c\in \ZZ_{-}$ if and only if $a\in \ZZ_{-}$.  In both of
  these cases we end up with a resonance.
\end{proof}

\section{Examples}
\label{sec:examples}

\subsection{Resonances for $\mathbb{H}^{n+1}$}
\label{sec:hyperbolic-ex}

In this section we recover the calculation of the location of
resonances on hyperbolic space.  In this case we have that the
cross-section $Y$ is $\mathbb{S}^{n}$ with its standard round metric.
The associated eigenvalues are given by 
\begin{equation*}
  \mu_{j}^{2} = j (j + n-1),
\end{equation*}
with multiplicities
\begin{equation*}
  m_{j} = \binom{n+j-2}{n} \frac{2j+n-1}{j},
\end{equation*}
where $\binom{n+j-2}{n}$ is the binomial coefficient.

We then have that
\begin{equation*}
  \sqrt{\left( \frac{n-1}{2}\right)^{2} + \mu_{j}^{2} } = j +
  \frac{n-1}{2}.
\end{equation*}
If $n$ is even, then $\frac{n-1}{2}$ is a half integer and the conclusion of Theorem~\ref{thm:main-thm} tells us that
odd-dimensional hyperbolic spaces have no resonances.  On the other
hand, if $n$ is odd, then $\frac{n-1}{2}$ is an integer so that 
even-dimensional hyperbolic spaces have resonances precisely at
\begin{equation*}
  \lambda_{j} = -\imath \left( \frac{1}{2} + \frac{n-1}{2} + j\right).
\end{equation*}
This recovers the well-known calculation of the resonances of
hyperbolic space.

\subsection{Resonances in the presence of a conic singularity}

Let us now take $n = 1$ so that $X$ is a surface with (potentially) an
isolated conic singularity.  This means that 
\begin{align*}
a & = \frac12 - i \lambda, \\
b  &=  \frac12 - i \lambda + \mu_j.
\end{align*}

Let $\hcone$ denote the hyperbolic cone of radius $\rho>0$, defined as
the product manifold $\hcone=\reals_+ \times\left(\mathbb{R} \big/
  2\pi\rho \mathbb{Z}\right)$, equipped with the metric $\g(r,\theta)
= \upd r^2 + \sinh^2 r \, \upd \theta^2$.  This is an incomplete
manifold which is locally isometric to $\HH^2$ away from the conic
singularity and hence hyperbolic there.  (In the case of the Euclidean cone ($\g(r,\theta) = \upd r^2
+r^2 \, \upd \theta^2$), see, for example, works of the second author
and collaborators~\cite{BFHM,blair2011strichartz}.)  Recall from above, that our methods
for computing resonances suggest that we should see resonances at
\[  \lambda = -i \left( \mu_j + \frac12  \right) - i k, \ \ k \in \NN.\]
In this case, the spectrum of $-\lap_{\rho}$ is easily described and we have
\[
\mu_j^2 \in \sigma ( -\Delta_\rho ) = \{ 0, \frac{1}{ \rho^2},
\frac{4}{ \rho^2}, \frac{9}{ \rho^2}, \dots \},
\]
the spectrum of the Laplacian on a circle with circumference $2 \pi
\rho$.  Thus, we observe that in such a case we have resonances for
\[
\lambda = -i \left( \frac12 + \frac{j}{\rho} - k \right)
\]
for $j \in \NN$ and $k = 1, 2, 3, \dots .$ In particular, we observe
that for cone angles much larger than $2 \pi$ ($\rho \gg1$), there are
resonances much closer to $\frac12$ than in the setting without a conic singularity, whereas for small cone angles $\rho < 1$, the resonances
introduced here occur at much larger values.

\def\cprime{$'$} \def\cftil#1{\ifmmode\setbox7\hbox{$\accent"5E#1$}\else
  \setbox7\hbox{\accent"5E#1}\penalty 10000\relax\fi\raise 1\ht7
  \hbox{\lower1.15ex\hbox to 1\wd7{\hss\accent"7E\hss}}\penalty 10000
  \hskip-1\wd7\penalty 10000\box7}

\end{document}